\newtheorem{theorem}{Theorem}
\newtheorem{proposition}{Proposition}
\newenvironment{definition}
{\smallskip\noindent{\bf Definition\/}:}{\smallskip\par}
\newenvironment{example}
{\smallskip\noindent{\bf Example\/}.}{\smallskip\par}
\newenvironment{examples}
{\smallskip\noindent{\bf Examples\/}.}{\smallskip\par}
\newenvironment{remark}
{\smallskip\noindent{\bf Remark\/}.}{\smallskip\par}
\newenvironment{proof}
{\noindent{\it Proof\/}.}{{ \hfill $\Box$}\smallskip\par}
\newcommand{\CC}{{\Bbb C}}
\newcommand{\PP}{{\Bbb P}}
\newcommand{\RR}{{\Bbb R}}
\newcommand{\ZZ}{{\Bbb Z}}
\newcommand{\calO}{{\cal O}}
\newcommand{\calD}{{\cal D}}
\newcommand{\uu}{\underline{u}}
\newcommand{\vv}{\underline{v}}
\newcommand{\ww}{\underline{w}}
\newcommand{\xx}{\underline{x}}
\newcommand{\kk}{\underline{k}}
\newcommand{\ttt}{\underline{t}}
\newcommand{\mm}{\underline{m}}
\newcommand{\uone}{\underline{1}}
\newcommand{\uzero}{\underline{0}}
\title{Multi-variable Poincar\'e series associated with Newton diagrams}
\author{W.~Ebeling and S.~M.~Gusein-Zade
\thanks{Partially supported by the DFG (Eb 102/6--1), RFBR--07--01--00593,
NSh-709.2008.1.
Keywords: Newton diagram, filtration, Poincar\'e series.
AMS Math. Subject Classification: 32S05, 14M25, 16W70.
}
}
\date{}
\begin{document}

\maketitle

\begin{abstract}
We define a multi-index filtration on the ring of germs of functions on a hypersurface singularity associated with its Newton diagram and compute the multivariable Poincar\'e series of this filtration in some cases.
\end{abstract}

\section*{Introduction}
Poincar\'e series of filtrations (including multi-index ones) on the ring of germs of functions on a complex analytic variety are of interest for some problems (see, e.g., \cite{CDG2}, \cite{CHR}, \cite{Ne}, \dots). 
In a number of cases they have an A'Campo type form being products/ratios of binomials of the form $(1-\ttt^{\, \mm})$. In some cases they are connected with monodromy zeta functions corresponding to the singularity (see, e.g., \cite{CDG1, EG}).

For quasi-homogeneous singularities one has the classical Poincar\'e series in one variable. A Poincar\'e series of one variable also corresponds to the semigroup of values of an irreducible curve singularity.
The initial motivation to consider multi-variable Poincar\'e series stems from the study of reducible curve singularities \cite{CDG2}. Recently they were found to be connected with the study of Seiberg-Witten invariants for surface singularities \cite{Ne}.

We define a multi-index filtration on the ring of germs of functions on a hypersurface singularity associated with its Newton diagram. One can say that this filtration is a multi-index generalization of the quasi-homogeneous one. We compute the Poincar\'e series of this filtration for curve singularities and for some singularities of more variables. In the computed cases, they turn out to be of A'Campo type.

 \section{Multi-index filtrations and their Poincar\'e series}
A function $v$ on the ring $\calO_{X,0}$ of germs of functions on the germ $(X,0)$ of an analytic space with values in $\ZZ_{\geq 0} \cup \{ \infty \}$ is called a {\em valuation} if
\begin{itemize}
\item[(1)] $v(g_1 + g_2) \geq \minÊ\{v(g_1), v(g_2)\}$,
\item[(2)] $v(g_1g_2)=v(g_1)+v(g_2)$,
\item[(3)] $v(c)=0$ for a non-zero constant $c \in \calO_{X,0}$.
\end{itemize}
If a function $v : \calO_{X,0} \to \ZZ_{\geq 0} \cup \{ \infty \}$ possesses the properties (1) and (3) but in general not the property (2), it is called an {\em order function}.

A family $\{ v_1, \ldots , v_s \}$ of order functions on the ring $\calO_{X,0}$ defines a multi-index filtration of the ring $\calO_{X,0}$. For $g \in \calO_{X,0}$, let
$$\vv(g):= (v_1(g), \ldots ,  v_s(g)) \in (\ZZ_{\geq 0} \cup \{ \infty \})^s.$$
For $\vv = (v_1, \ldots , v_s) \in \ZZ^s$ the corresponding subspace is defined by 
$$J(\vv) = \{ g \in \calO_{X,0} \, : \, \vv(g) \geq \vv \}.$$
(Here $\vv(g) \geq \vv$ means that $v_i(g)\geq v_i$ for all $i=1, \ldots , s$.)

The notion of the Poincar\'e series of the multi-index filtration $\{ J(\vv) \}$ defined by a family $\{ v_i \}$ of order functions was given in \cite{CDK}. For $\vv \in \ZZ^s$, let $d(\vv)=\dim J(\vv)/J(\vv+\uone)$ where $\uone=(1, 1, \ldots, 1)$. Let
$$
L(\ttt)=\sum\limits_{\vv\in\ZZ^s}d(\vv)\, \ttt^{\,\vv}\,,
$$
where $\ttt=(t_1, \ldots, t_s)$, $\ttt^{\,\vv}=t_1^{v_1}\cdot\ldots\cdot t_s^{v_s}$. (Pay attention that the sum is over all $\vv$ from $\ZZ^s$, not from $\ZZ_{\ge 0}^s$. For $s>1$, the series $L(\ttt)$ contains monomials with negative exponents.) The {\em Poincar\'e series} of the multi-index filtration $\{J(\vv)\}$ is the power series in $\ttt=(t_1, \ldots, t_s)$ defined by
\begin{equation}\label{Ps}
P_{\{v_i\}}(\ttt)=\frac{L(\ttt)\cdot\prod\limits_{i=1}^s(t_i-1)}{t_1\cdot\ldots\cdot t_s-1}\,.
\end{equation}
(This makes sense if all the dimensions $d(\vv)$ are finite.)

The equation (\ref{Ps}) implies that the coefficient at $\ttt^{\,\vv}$ in the Poincar\'e series $P_{\{v_i\}}(\ttt)$ is equal to
\begin{equation}\label{alt}
\sum_{I\subset I_0}(-1)^{\#I}\dim J(\vv+\uone_I)/J(\vv+\uone)\,,
\end{equation}
where $I_0=\{1, 2, \ldots, s\}$, $\uone_I$ is the $s$-tuple the $i$-th component of which is equal to 0 for $i\notin I$ and is equal to 1 otherwise.

\begin{remark}
One can easily see that, if all the subspaces $J(\vv+\uone_{\{i\}})$ (and therefore all the subspaces $J(\vv+\uone_I)$ for $I\ne \emptyset$) are contained in one of them, say, in $J(\vv+\uone_{\{1\}})$, this coefficient is equal to $\dim J(\vv)/J(\vv+\uone_{\{1\}})$.
\end{remark}

The equation (\ref{alt}) (together with the inclusion-exclusion formula) implies that the coefficient at $\ttt^{\,\vv}$ in the Poincar\'e series is equal to the Euler characteristic $\chi(\PP F_{\vv})$ of the projectivisation $\PP F_{\vv}=F_{\vv}/\CC^*$ of the space
$$
F_{\vv}= \left( J(\vv)/J(\vv+\uone)\right) \setminus \bigcup_{i=1}^s \left( J(\vv+\uone_{\{i\}})/J(\vv+\uone) \right)\,
$$
(see \cite{CDG2}).

To compute the Euler characteristic $\chi(\PP F_{\vv})$, it can be convenient to define a $\CC^*$-action on the space $\PP F_{\vv}$ (or on elements of a constructible partitioning of it). In this case the Euler characteristic of the total space coincides with the Euler characteristic of the set of fixed points. In particular, if the $\CC^*$-action is free, the Euler characteristic $\chi(\PP F_{\vv})$ is equal to zero.

One says that a multi-index filtration $\{J(\vv)\}$ on the ring $\calO_{X,0}$ is induced by a (multi-)grading if there exist subspaces $A_{\,\vv}\subset \calO_{X,0}$, $\vv\in\ZZ_{\ge 0}^s$, such that the ring $\calO_{X,0}$ is a completion of the graded algebra $\bigoplus\limits_{\vv\in\ZZ_{\ge 0}^s}A_{\,\vv}$ and $J(\vv)$ is the corresponding completion of $\bigoplus\limits_{\vv'\ge\vv}A_{\,\vv}$. One can easily see that, if a filtration $\{J(\vv)\}$ is induced by a grading $\{A_{\,\vv}\}$ with finite-dimensional subspaces $A_{\,\vv}$, the Poincar\'e series of the filtration $\{J(\vv)\}$ is given by the equation
$$
P(\ttt)=\sum\limits_{\vv\in\ZZ_{\ge0}^s}\dim A_{\,\vv}\cdot \ttt^{\,\vv}\,.
$$
This is not the case in general. Coefficients of the Poincar\'e series are not, generally speaking, dimensions of some spaces. They may be negative (see, e.g., Example~1 at the end of the paper).

\section{Multi-index filtration corresponding to a Newton diagram}
Let $f :(\CC^n,0) \to (\CC,0)$ be the germ of a holomorphic function with an isolated critical point at the origin, non-degenerate with respect to its Newton diagram $\Gamma = \Gamma(f)$ \cite{AGV}. Let $V=\{ f=0\}$ be the corresponding hypersurface singularity. Here we shall define a multi-index filtration on the ring  $\calO_{V,0}$ of germs of functions on the hypersurface $(V,0)$. This filtration is a generalization of the  quasi-homogeneous filtrations defined by the equations of the facets of the Newton diagram.

Suppose that the Newton diagram $\Gamma$ has $s$ faces $\gamma_1, \ldots , \gamma_s$ of dimension $n-1$ (facets), and let $\gamma_i$ lie in the hyperplane given by the equation
$$\ell_i(k_1, \ldots , k_n) = a_1^{(i)} k_1+ \cdots + a_n^{(i)} k_n = d^{(i)}$$
where $a_1^{(i)}, \ldots , a_n^{(i)}$ and $d^{(i)}$ are positive integers with greatest common divisor equal to 1.

For a monomial $\xx^{\,\kk}=x_1^{k_1}\cdot\ldots\cdot x_n^{k_n}$, let
$$
u_i(\xx^{\,\kk}):=\ell_i(k_1, \ldots , k_n)=\sum\limits_{j=1}^n a_j^{(i)} k_j\,,
$$
where $\ell_i(\kk)=\sum\limits_{j=1}^n a_j^{(i)} k_j=d^{(i)}$ is the equation of the face $\gamma_i$ of the Newton diagram $\Gamma$. For a germ $g(x_1, \cdots, x_n)=\sum\limits_{\kk\in\ZZ^n_{\ge0}}c_{\kk}\,\xx^{\,\kk}\in \calO_{\CC^n,0}$, let $u_i(g):=\min\limits_{\kk: c_{\kk}\ne0} u_i(\xx^{\,\kk})$. 
The function $u_i$ is a valuation on the ring $\calO_{\CC^n,0}$. 

\begin{proposition}
The Poincar\'e series $P_{\{u_i\}}(\ttt)$ of the family $\{u_i\}$ of valuations is given by the equation
$$
P_{\{u_i\}}(\ttt)=\prod\limits_{j=1}^n (1-\ttt^{\,\uu(x_j)})^{-1}
$$
$($$x_j$ is the $j$-th coordinate function on the space $\CC^n$$)$.
\end{proposition}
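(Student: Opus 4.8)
The plan is to recognise the filtration $\{J(\vv)\}$ defined by $\{u_i\}$ as one induced by the monomial (multi-)grading of $\calO_{\CC^n,0}$ and then to apply the formula $P(\ttt)=\sum_{\vv}\dim A_{\,\vv}\cdot\ttt^{\,\vv}$ quoted near the end of Section~1 (note that the statement concerns only the ambient ring $\calO_{\CC^n,0}$ and the combinatorics of the forms $\ell_i$, not the function $f$ itself). For $\vv\in\ZZ_{\ge0}^s$ put
$$
A_{\,\vv}=\mathrm{span}_{\CC}\bigl\{\,\xx^{\,\kk}\ :\ \kk\in\ZZ_{\ge0}^n,\ \uu(\xx^{\,\kk})=\vv\,\bigr\},\qquad \uu(\xx^{\,\kk})=(u_1(\xx^{\,\kk}),\dots,u_s(\xx^{\,\kk})).
$$
Since each $\ell_i$ has positive integer coefficients, the single equation $\ell_1(\kk)=v_1$ already forces $k_j\le v_1$ for all $j$; hence every $A_{\,\vv}$ is finite-dimensional, and for every $N$ there are only finitely many monomials with $u_1(\xx^{\,\kk})\le N$. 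This last fact is what makes $\calO_{\CC^n,0}$ a completion of the graded algebra $\bigoplus_{\vv}A_{\,\vv}=\CC[x_1,\dots,x_n]$ in the required sense.

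Next I would verify that $J(\vv)$ coincides with the completion of $\bigoplus_{\vv'\ge\vv}A_{\,\vv'}$. This is immediate from the definition $u_i(g)=\min_{\kk:\,c_{\kk}\ne0}u_i(\xx^{\,\kk})$: writing $g=\sum_{\kk}c_{\kk}\xx^{\,\kk}$, one has $g\in J(\vv)$ iff $u_i(g)\ge v_i$ for every $i$, i.e. iff every monomial $\xx^{\,\kk}$ actually occurring in $g$ satisfies $\uu(\xx^{\,\kk})\ge\vv$. Thus $\{J(\vv)\}$ is induced by the grading $\{A_{\,\vv}\}$, and the quoted formula gives $P_{\{u_i\}}(\ttt)=\sum_{\vv\in\ZZ_{\ge0}^s}\dim A_{\,\vv}\cdot\ttt^{\,\vv}$.

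Finally I would rewrite the right-hand side as a sum over monomials: each monomial $\xx^{\,\kk}$ contributes $1$ to $\dim A_{\,\vv}$ for the single value $\vv=\uu(\xx^{\,\kk})$, so $\sum_{\vv}\dim A_{\,\vv}\cdot\ttt^{\,\vv}=\sum_{\kk\in\ZZ_{\ge0}^n}\ttt^{\,\uu(\xx^{\,\kk})}$. Because every $u_i$ is a valuation (property~(2)), $u_i(\xx^{\,\kk})=\sum_{j=1}^n k_j\,u_i(x_j)$, i.e. $\uu(\xx^{\,\kk})=\sum_{j=1}^n k_j\,\uu(x_j)$, so the sum splits as a product of geometric series:
$$
\sum_{\kk\in\ZZ_{\ge0}^n}\prod_{j=1}^n\bigl(\ttt^{\,\uu(x_j)}\bigr)^{k_j}=\prod_{j=1}^n\ \sum_{k_j\ge0}\bigl(\ttt^{\,\uu(x_j)}\bigr)^{k_j}=\prod_{j=1}^n\bigl(1-\ttt^{\,\uu(x_j)}\bigr)^{-1},
$$
which is exactly the asserted formula; here $\uu(x_j)=(a_j^{(1)},\dots,a_j^{(s)})$ has positive entries, so each geometric series makes sense as a power series.

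The one step deserving care — essentially the only non-formal point — is the claim that $\{J(\vv)\}$ is induced by a grading in the precise technical sense required by the statement at the end of Section~1; once this is granted, the remainder is a routine manipulation of power series. As indicated above, this rests entirely on the positivity of the coefficients $a_j^{(i)}$ of the linear forms $\ell_i$, which guarantees both the finite-dimensionality of the graded pieces $A_{\,\vv}$ and the appropriate boundedness-from-below of the grading.
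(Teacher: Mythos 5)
Your proposal is correct and takes exactly the same route as the paper: the authors also observe that $\{J(\uu)\}$ is induced by the grading whose piece $A_{\,\uu}$ is spanned by the monomials $\xx^{\,\kk}$ with $\ell_i(\kk)=u_i$ for all $i$, and then apply the formula $P(\ttt)=\sum_{\vv}\dim A_{\,\vv}\,\ttt^{\,\vv}$. You merely spell out the verification (finite-dimensionality of the $A_{\,\vv}$ via positivity of the $a_j^{(i)}$, identification of $J(\vv)$, and the factorization into geometric series) that the paper leaves to the reader.
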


The proof easily follows from the fact that, in this case, the filtration $\{J(\uu)\}$ is induced by a grading. The corresponding subspace $A_{\,\uu}$, $\uu=(u_1, \ldots, u_s)\in\ZZ^s_{\ge 0}$, is generated by the monomials $\xx^{\,\kk}$ with $\ell_i(\kk)=u_i$, $i=1, \ldots, s$.

For a function $g\in \calO_{V,0}=\calO_{\CC^n,0}/(f)$, let
$$
v_i(g):=\max\limits_{g': g'\equiv g \mod f} u_i(g')\,.
$$
The function $v_i$ on the ring $\calO_{V,0}$ is not, generally speaking, a valuation. 
For example, for $f(x,y)=x^5+x^2y^2+y^5$ and for the face of the Newton diagram given by the equation $\ell(k_x,k_y)=2k_x+3k_y$, one has $u(x^2)=4$, $u(x^3+y^2)=6$, but $u(x^5+x^2y^2)=15$.
However, it is an order function. In this way one gets a family $\{v_1, \ldots, v_s\}$ of order functions and the corresponding $s$-index filtration on the ring $\calO_{V,0}$. We shall call it the {\em Newton filtration}.

\section{Poincar\'e series of the Newton filtration for curve singularities}
Let $f$ be the germ of a function of two variables as above and let $\Gamma$ be the corresponding Newton diagram. Let $v_1, \ldots, v_s$ be the order functions on the ring $\calO_{V,0}$ ($V=\{f=0\}$) corresponding to the one-dimensional faces $\gamma_1, \ldots, \gamma_s$ of the diagram $\Gamma$. These order functions are induced by the valuations $u_1$, \ldots, $u_s$ on the ring $\calO_{\CC^2,0}$.

\begin{theorem} \label{Thm:1}
One has
$$
P_{\{v_i\}}(\ttt)=\left\{ \begin{array}{cl}
(1-\ttt^{\,\uu(f)})\cdot P_{\{u_i\}}(\ttt)  & \mbox{ for }s=2,\\
P_{\{u_i\}}(\ttt)  & \mbox{ for }s>2.
\end{array} \right.
$$
\end{theorem}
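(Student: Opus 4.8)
The plan is to compute the Poincar\'e series $P_{\{v_i\}}(\ttt)$ via the Euler-characteristic description of its coefficients given in Section~1: the coefficient at $\ttt^{\,\vv}$ equals $\chi(\PP F_{\vv})$, where $F_{\vv}=\bigl(J(\vv)/J(\vv+\uone)\bigr)\setminus\bigcup_i\bigl(J(\vv+\uone_{\{i\}})/J(\vv+\uone)\bigr)$ and $J(\vv)$ now refers to the Newton filtration on $\calO_{V,0}$. First I would understand $J(\vv)/J(\vv+\uone)$ concretely. Since $V$ is a plane curve singularity, $\calO_{V,0}=\calO_{\CC^2,0}/(f)$, and a germ $g\in\calO_{V,0}$ has $v_i(g)=\max_{g'\equiv g} u_i(g')$. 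A natural basis for $\calO_{V,0}$ is given by monomials $x^a y^b$ with $b$ bounded (if $f$ is $y$-general of order $m$, say $0\le b<m$), so the graded pieces are spanned by such monomials together with the classes of elements $g'$ obtained by adding multiples of $f$ that raise some $u_i$-values. The key structural point is that adding a multiple $h\cdot f$ to $g'$ changes things only when the leading Newton data of $h\cdot f$ along some facet $\gamma_i$ cancels that of $g'$; because $f$ is non-degenerate, the restriction of $f$ to each facet $\gamma_i$ is essentially a product of distinct linear forms in the appropriate weighted sense, which pins down exactly when such cancellation is possible.

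The core of the argument is a $\CC^*$-action argument on $\PP F_{\vv}$. I would try to exhibit, for each $\vv$, a $\CC^*$-action on $\PP F_{\vv}$ (or on the pieces of a constructible partition) that is free except on a locus whose Euler characteristic is easy to read off, following the recipe spelled out in Section~1 ("if the $\CC^*$-action is free, $\chi(\PP F_{\vv})=0$"). For $s>2$ I expect the action to be free for every $\vv$, giving $P_{\{v_i\}}(\ttt)\equiv 0$ in the sense of having the same coefficients as... no --- rather, I expect the comparison to go the other way: one should show $J(\vv)$ for the Newton filtration on $\calO_{V,0}$ differs from the image of $J(\uu)$ under $\calO_{\CC^2,0}\to\calO_{V,0}$ only in a way that, after taking the alternating sum in (\ref{alt}), reproduces $P_{\{u_i\}}(\ttt)$ exactly when $s>2$, and picks up the extra factor $(1-\ttt^{\,\uu(f)})$ when $s=2$. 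Concretely, the map $\calO_{\CC^2,0}\to\calO_{V,0}$ is surjective with kernel $(f)$, and $(f)$ is itself filtered; the Poincar\'e series of a quotient filtration should be $P_{\{u_i\}}(\ttt)$ minus the Poincar\'e series of the induced filtration on the ideal $(f)$. Since multiplication by $f$ is an isomorphism $\calO_{\CC^2,0}\to (f)$ shifting $\uu$-degree by $\uu(f)$ whenever the valuations $u_i$ are genuinely additive on the relevant elements, the ideal's contribution is $\ttt^{\,\uu(f)}\cdot P_{\{u_i\}}(\ttt)$ --- but only if the filtration induced on $(f)$ from $\calO_{V,0}$ via the $v_i$ coincides with the one transported from $\calO_{\CC^2,0}$ by multiplication by $f$. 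This coincidence is where $s=2$ versus $s>2$ diverges.

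The main obstacle, then, is precisely the discrepancy between $v_i$ on $\calO_{V,0}$ and $u_i$ on representatives, illustrated by the paper's own example $f=x^5+x^2y^2+y^5$: the maximum over $g'\equiv g$ can jump, so the naive ``short exact sequence of filtered modules'' $0\to(f)\to\calO_{\CC^2,0}\to\calO_{V,0}\to0$ is not strict, and one cannot just subtract Poincar\'e series. I would handle this by working facet-by-facet. For each $\vv$, I would identify which monomials $x^ay^b$ lie ``on'' the relevant boundary (those realizing $u_i=v_i$ simultaneously for all $i$ in some subset) and then analyze the finitely many linear conditions coming from cancellation against $f\cdot\calO$. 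When $s>2$ there are at least three facets, hence at least three slopes, and a nonzero element of a graded piece cannot simultaneously sit at the ``corner'' for all of them unless forced --- the extra facets make the cancellation locus inside $\PP F_{\vv}$ a proper $\CC^*$-invariant subvariety on which a residual scaling acts freely, killing the Euler characteristic of the correction term and leaving $P_{\{v_i\}}=P_{\{u_i\}}$. When $s=2$ the two facets meet at a single interior lattice point of $\Gamma$ and the correction does survive, contributing exactly the factor $(1-\ttt^{\,\uu(f)})$: here I would check directly that $J(\vv)/J(\vv+\uone)$ for the $v_i$-filtration equals the $u_i$-graded piece modulo the one-dimensional space generated by the appropriate multiple of $f$, which multiplies the generating function by $(1-\ttt^{\,\uu(f)})$.
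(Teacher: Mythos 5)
Your general toolkit is the right one (the Euler-characteristic description of the coefficients via $\PP F_{\vv}$ and the free $\CC^*$-action trick), and you correctly identify the central obstacle: the exact sequence $0\to (f)\to\calO_{\CC^2,0}\to\calO_{V,0}\to 0$ is not strict for the Newton filtration, so one cannot simply subtract $\ttt^{\,\uu(f)}P_{\{u_i\}}$. But the proposal never actually overcomes this obstacle; it remains a roadmap. The paper's proof consists of a concrete lattice-geometric case analysis that is entirely absent from your account: for each $\vv$, one looks at the lines $L_i=\{\ell_i(\kk)=v_i\}$ in the plane, identifies the monomials on the boundary of $B=\{\ell_i\ge v_i\ \forall i\}$ as a basis of the $u$-graded piece, and then traces exactly which monomials are eliminated by translates of $\Gamma$ (i.e.\ by the relation $f=0$). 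For $s=2$ the answer splits according to whether the intersection point $\kk$ of $L_1\cap L_2$ is non-integral, has a negative coordinate, satisfies $\kk\ge\mm$, or satisfies $\kk\ge 0$ and $\kk\not\ge\mm$; the alternating sum (\ref{alt}) then gives coefficient $0$ in the first three cases and $1$ in the last, which is what produces $\sum_{\kk\ge 0,\,\kk\not\ge\mm}\ttt^{\,\uu(\xx^{\kk})}=(1-\ttt^{\,\uu(f)})P_{\{u_i\}}$. Your proposed shortcut, that each $v$-graded piece is the $u$-graded piece modulo a one-dimensional space so that the generating function is multiplied by $(1-\ttt^{\,\uu(f)})$, is exactly the graded-case reasoning you just said does not apply, and indeed it is false for individual $\vv$; only the alternating sum comes out right, and only after the case analysis.

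The $s>2$ part has a more serious conceptual gap. You argue that ``the extra facets make the cancellation locus a proper $\CC^*$-invariant subvariety on which a residual scaling acts freely, killing the Euler characteristic of the correction term.'' This does not describe what happens. In the paper, the $\CC^*$-action is used only when the lines $L_i$ have \emph{no} common non-negative integral point, to show the coefficient is $0$; when they \emph{do} have such a common point $\kk$, the coefficient is $1$ (proved directly via the Remark in Section~1, not via a $\CC^*$-action), which is also the coefficient in $P_{\{u_i\}}=\sum_{\kk\ge 0}\ttt^{\,\uu(\xx^{\kk})}$. So $P_{\{v_i\}}=P_{\{u_i\}}$ because the two series literally have the same coefficients, not because some ``correction term'' is killed. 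Your picture of a correction subtracted from $P_{\{u_i\}}$ never materializes here, and the notion of ``a $\CC^*$-action on the correction term'' is not well defined. Moreover the construction of the free $\CC^*$-action in the no-common-point case is itself delicate (one has to separate the sub-case where some face $\beta_i$ of $B$ misses all translates of $\Gamma$ from the sub-case where every face meets a translate, and build different actions), and none of this is addressed. Finally, the true dividing line between the two formulas is not ``strictness'' per se but whether the facets of $\Gamma$ share a common vertex $\mm$ (the stellar condition of Section~4): for $s=2$ they always do and $\uu(f)=\uu(\xx^{\,\mm})$, while for $s>2$ in the plane they never do; this is the structural reason the two cases differ, and it is obscured in your write-up.
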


\begin{proof}
Let the Newton diagram $\Gamma$ consist of two faces (i.e., $s=2$), and let $\mm=(m_1, m_2)$ be the intersection point of them. (The coordinates $m_1$ and $m_2$ are integers.) To compute the coefficient at $\ttt^{\,\vv}$, $\vv=(v_1,v_2)\in\ZZ_{\ge0}^2$, consider the lines $L_i=\{\ell_i(\kk)=v_i\}$, $i=1,2$, and let $\kk=(k_1, k_2)$ be their intersection point.

Suppose that the intersection point of the lines $L_1$ and $L_2$ is either non-integral, or one of its coordinates is negative, or it satisfies the condition $\kk\ge\mm$ (i.e., $k_i\ge m_i$ for $i=1,2$). In $\calO_{\CC^2,0}$ the space $J(\vv)/J(\vv+\uone)$ is freely generated by the monomials $\xx^{\,\kk}$ whose exponents $\kk$ are the integer points on the boundary of the domain $\{\ell_i(\kk)\ge v_i \mbox{ for }i=1,2\}$ (and thus lie on the lines $L_1$ and $L_2$). Using the relation $f=0$ in $\calO_{V,0}$, one eliminates some monomials (if any) on the lines $L_1$ and $L_2$ starting from the intersection point of these lines. (If the intersection point is integral, it is eliminated.) Let $p_i$ be the number of remaining points (monomials) on the line $L_i$, $i=1,2$. Then, in $\calO_{V,0}$, one has
\begin{eqnarray*}
& & \dim J(\vv)/J(\vv+\uone) =   p_1+p_2,\\
& & \dim J(\vv+\uone_{\{1\}})/J(\vv+\uone)  =  p_2, \quad \dim J(\vv+\uone_{\{2\}})/J(\vv+\uone)  = p_1,
\end{eqnarray*}
and the equation (\ref{alt}) implies that the coefficient at $\ttt^{\,\vv}$ is equal to zero.

Suppose that the intersection point of the lines $L_1$ and $L_2$ is integral, non-negative (i.e., $k_i\ge0$ for $i=1,2$) and satisfies the condition $k_1<m_1$. (The case $k_2<m_2$ is treated in the same way.) 
In this case the relation $f=0$ permits to eliminate points (if any) only on the line $L_2$. In particular, the intersection point of the lines $L_1$ and $L_2$ is not eliminated. As above, let $p_i$ be the number of remaining points on the line $L_i$, $i=1,2$. (The intersection point is counted on both of them.) Then, in $\calO_{V,0}$, one has
\begin{eqnarray*}
& & \dim J(\vv)/J(\vv+\uone)= p_1+p_2-1,\\
& & \dim J(\vv+\uone_{\{1\}})/J(\vv+\uone)= p_2-1, \quad \dim J(\vv+\uone_{\{2\}})/J(\vv+\uone)= p_1-1,
\end{eqnarray*}
and the equation (\ref{alt}) implies that the coefficient at $\ttt^{\,\vv}$ is equal to 1.

Therefore 
$$
P_{\{v_i\}}(\ttt)  =  \sum\limits_{\kk\in\ZZ_{\ge0}^2:\kk\not\ge\mm}\ttt^{\,\uu(\xx^{\,\kk})}=
 (1-\ttt^{\,\uu(\xx^{\,\mm})})\cdot \prod\limits_{i=1}^2(1-\ttt^{\,\uu(x_i)})^{-1} 
 =  (1-\ttt^{\,\uu(f)})\cdot P_{\{u_i\}}(\ttt)\,.
$$

Let the Newton diagram consist of more than 2 faces (i.e., $s>2$). For $\vv=(v_1,\ldots,v_s)\in\ZZ_{\ge0}^s$, let $L_i$ be the line $\{\ell_i(\kk)=v_i\}$, $i=1,\ldots, s$. Suppose first that all the lines $L_i$ have (one) common integral point $\kk\ge\uzero$. The relation $f=0$ permits to eliminate some points (if any) on two of the lines $L_i$ (the extreme ones), but not the intersection point $\kk$ itself. In the space $J(\vv)/J(\vv+\uone)$ each subspace $J(\vv+\uone_{\{i\}})/J(\vv+\uone)$ is contained in the subspace $\{c_{\kk}=0\}$ ($c_{\kk}$ is the coefficient at $\xx^{\,\kk}$ in the power series decomposition of a function) and some of them (in fact all but at most two) coincide with this subspace. This implies that the coefficient at $\ttt^{\,\vv}$ ($\vv=(\ell_1(\kk), \ldots, \ell_s(\kk))$) in the Poincar\'e series is equal to
\begin{equation} \label{eq:thm}
\dim J(\vv)/J(\vv+\uone)-\dim\{c_{\kk}=0\}=1
\end{equation}
(see the Remark in Section~1).

Now suppose that the lines $L_i$ do not have a common point in the non-negative orthant. We shall show that in this case the coefficient at $\ttt^{\,\vv}$ in the Poincar\'e series is equal to zero.
We may suppose that each line $L_i$ intersects the boundary of the domain $B=\{\ell_i(\kk)\ge v_i \mbox{ for }i=1,\ldots,s\}$. Otherwise $J(\vv+\uone_{\{i\}})/J(\vv+\uone)=J(\vv)/J(\vv+\uone)$ and the coefficient at $\ttt^{\,\vv}$ is equal to zero according to the Remark in Section~1.

As written above, in $\calO_{\CC^2,0}$, the factorspace $J(\vv)/J(\vv+\uone)$ is freely generated by the monomials $\xx^{\,\kk}$ with $\kk$ from the boundary of the domain $B$. The relations between these generators in $\calO_{V,0}=\calO_{\CC^2,0}/(f)$ correspond to non-negative integer translations of the Newton diagram $\Gamma$ such that the translate of the diagram is contained in the domain $B$ and intersects the boundary of the domain. Suppose that there is a face $\beta_i=L_i\cap B$ of the domain $B$ (possibly of length zero) which cannot intersect a translate (in the described way) of the Newton diagram. In this case the monomials corresponding to integer points on the face $\beta_i$ do not participate in any relation. Multiplication of the coefficients of all these monomials in the power series decomposition of a function by $\lambda\in\CC^*$ defines a free $\CC^*$-action on the part of the space $\PP F_{\vv}$ where at least one coefficient of a monomial corresponding to a point on the boundary of the domain $B$ outside of the face $\beta_i$ is different from zero. Taking into account the condition that all the lines $L_i$ do not have a common non-negative integer point, one can see that the complement to this part may be non-empty only if the length of the face $\beta_i$ is finite, but not zero, both ends $\kk_1$ and $\kk_2$ of this face are integral and the boundary of the domain $B$ consists of $\beta_i$ and two rays (corresponding to the extreme two lines among $L_j$). (All the other lines (if any) go through the points $\kk_1$ or $\kk_2$.) Moreover, the both coefficients at $\xx^{\,\kk_1}$ and at $\xx^{\,\kk_2}$ in the power series decomposition of a function from the complement under consideration are different from zero. Therefore a free $\CC^*$-action on this part can be defined by multiplying the coefficient at $\xx^{\,\kk_1}$ by $\lambda\in\CC^*$.

If all faces of the domain $B$ intersect translates of the Newton diagram, a vertex $\kk$ of it (in fact any one) lies on a translate of $\Gamma$. This means that the coefficient at $\xx^{\,\kk}$ can be eliminated with the help of the corresponding relation. There are no other relations which include points of the boundary $\partial B$ from both connected components of $\partial B\setminus\{\kk\}$. In this case a free $\CC^*$-action on the space $\PP F_{\vv}$ can be defined by multiplying coefficients at all the monomials from one of the connected components of $\partial B\setminus\{\kk\}$ by $\lambda\in\CC^*$. Combing with Equation~(\ref{eq:thm}), this implies the statement of Theorem~\ref{Thm:1} for $s>2$.
\end{proof}

\section{Poincar\'e series of some singularities of more than 2 variables}
It looks somewhat complicated to get the Poincar\'e series corresponding to an arbitrary Newton diagram. However, a special property of some Newton diagrams permits to compute the Poincar\'e series for them in a uniform way.

\begin{definition}
We say that a Newton diagram is {\it stellar} if all its facets (faces of maximal dimension) have a common vertex.
\end{definition}

\begin{example}
Singularities with stellar Newton diagrams include surface singularities of type $T_{p,q,r}$, suspensions of singularities, and singularities with the Newton diagram consisting of 2 faces, in particular, bimodal singularities.
\end{example}

\begin{theorem}
Let the Newton diagram $\Gamma$ of a germ $f\in\calO_{\CC^n,0}$ be stellar. Then
$$
P_{\{v_i\}}(\ttt)=
(1-\ttt^{\,\uu(f)})\cdot P_{\{u_i\}}(\ttt)\,.
$$
\end{theorem}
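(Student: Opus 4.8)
The plan is to imitate the proof of Theorem~\ref{Thm:1}: I would compute the coefficient of each $\ttt^{\,\vv}$ in $P_{\{v_i\}}(\ttt)$ by working in $\calO_{\CC^n,0}$, with $\calO_{V,0}=\calO_{\CC^n,0}/(f)$, and keeping track of monomials on the boundary of a polyhedral region modulo the relations imposed by $f$. First fix a vertex $\mm$ common to all facets $\gamma_1,\ldots,\gamma_s$. Since $f$ is non-degenerate with respect to $\Gamma=\Gamma(f)$ and $\mm$ is a vertex of $\Gamma$, the monomial $\xx^{\,\mm}$ occurs in $f$ and $\ell_i(\mm)=d^{(i)}=u_i(f)$ for every $i$, so $\uu(\xx^{\,\mm})=\uu(f)$. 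Using Proposition~1 one rewrites the right-hand side as
$$
(1-\ttt^{\,\uu(f)})\cdot P_{\{u_i\}}(\ttt)=\sum_{\kk\in\ZZ^n_{\ge0}}\ttt^{\,\uu(\xx^{\,\kk})}-\sum_{\kk\in\ZZ^n_{\ge0}}\ttt^{\,\uu(\xx^{\,\kk+\mm})}=\sum_{\kk\in\ZZ^n_{\ge0},\ \kk\not\ge\mm}\ttt^{\,\uu(\xx^{\,\kk})},
$$
so it is enough to prove that the coefficient of $\ttt^{\,\vv}$ in $P_{\{v_i\}}(\ttt)$ equals $\#\{\kk\in\ZZ^n_{\ge0}:\kk\not\ge\mm,\ \ell_i(\kk)=v_i\ \text{for all }i\}$. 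If some $v_i<0$, the $i$-th condition defining $J(\vv)$ is vacuous, $J(\vv)=J(\vv+\uone_{\{i\}})$, and the coefficient vanishes by the Remark in Section~1; so I may assume $\vv\in\ZZ^s_{\ge0}$.

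As in the proof of Theorem~\ref{Thm:1}, in $\calO_{\CC^n,0}$ the space $J(\vv)/J(\vv+\uone)$ is freely generated by the monomials $\xx^{\,\kk}$ with $\kk$ an integer point on the boundary $\partial B$ of $B=B(\vv)=\{\kk\in\RR^n_{\ge0}:\ell_i(\kk)\ge v_i,\ i=1,\ldots,s\}$ lying on one of the hyperplanes $L_i=\{\ell_i=v_i\}$, and the relations appearing in $\calO_{V,0}$ are produced by the germs $\xx^{\,\mm'}f$, $\mm'\in\ZZ^n_{\ge0}$; such a germ contributes a non-trivial relation exactly when $u_i(\xx^{\,\mm'}f)=\ell_i(\mm')+d^{(i)}\ge v_i$ for all $i$, with equality for at least one $i$. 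This is where the stellar hypothesis enters: since $\ell_i(\mm)=d^{(i)}$ for \emph{every} $i$, that condition says precisely that $\kk:=\mm'+\mm$ is an integer point of $\partial B$ with $\kk\ge\mm$. So the relevant translates $\mm'+\Gamma$ of the Newton diagram are indexed, one apiece, by the integer points $\kk\in\partial B$ with $\kk\ge\mm$.

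To see how much these relations kill, I would pick a linear functional $\phi$ on $\RR^n$ whose maximum over $\mathrm{supp}(f)$ is attained only at $\mm$ — this exists because $\mm$ is a vertex of the Newton polyhedron $\Gamma_+(f)$ — and order monomials by $\phi$. Reduced modulo $J(\vv+\uone)$, the relation attached to $\kk=\mm'+\mm$ has $\xx^{\,\kk}$ as its unique $\phi$-leading term; hence the relations are linearly independent, and Gaussian elimination along them removes exactly the monomials $\xx^{\,\kk}$ with $\kk\in\partial B\cap\ZZ^n$ and $\kk\ge\mm$. Carrying out the same count for each $J(\vv+\uone_I)$, so that $\dim J(\vv+\uone_I)/J(\vv+\uone)=\#\{\kk\in\partial B\cap\ZZ^n:\kk\not\ge\mm,\ \ell_i(\kk)>v_i\ \text{for }i\in I\}$, and substituting into (\ref{alt}), the alternating sum collapses by the inclusion-exclusion formula to $\#\{\kk\in\ZZ^n_{\ge0}:\kk\not\ge\mm,\ \ell_i(\kk)=v_i\ \text{for all }i\}$, as needed. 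Equivalently, one may run the $\CC^*$-action argument used for $s>2$ in Theorem~\ref{Thm:1}: on the part of $\PP F_{\vv}$ not already annihilated by one of these relations, rescaling the coefficients of the monomials lying on one side of the eliminated vertex $\kk$ is a free $\CC^*$-action, so that part contributes nothing to the Euler characteristic and only the points $\kk\not\ge\mm$ survive.

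I expect the main difficulty to be the last step — checking that the relations cut out inside $J(\vv)/J(\vv+\uone)$ by $(f)$ are really spanned by the reductions of the $\xx^{\,\mm'}f$ above and that they are upper-triangular in the claimed sense, which requires controlling the lower-order monomials $\xx^{\,\mm'+\kk_0}$ (with $\kk_0\in\mathrm{supp}(f)\setminus\{\mm\}$) that may lie on several faces of $B$ at once, and reconciling $J(\vv)$ with the image of $\{u_i\ge v_i\}\subset\calO_{\CC^n,0}$. Being stellar is precisely what keeps this under control: it puts the distinguished vertex $\mm$ on all the facets, so that the combinatorics of which translates of $\Gamma$ fit into $B$ is governed by the single lattice point $\mm$, with pivots the monomials $\xx^{\,\kk}$, $\kk\ge\mm$; the $\CC^*$-action formulation is the robust way to make the argument rigorous.
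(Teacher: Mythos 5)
Your proposal follows the paper's proof in all essentials: fix the common vertex $\mm$ of the facets, observe that the relations on $J(\vv)/J(\vv+\uone)$ coming from $(f)$ are spanned by the $\xx^{\,\mm'}f$ with $\mm'+\mm\in\partial B\cap\ZZ^n_{\ge0}$, and eliminate the pivots $\xx^{\,\kk}$, $\kk\ge\mm$, by ordering monomials with a generic linear functional that is strictly maximal at $\mm$ on the Newton diagram (your $\phi$ plays exactly the role of the paper's $\lambda$). The remaining bookkeeping via (\ref{alt}), inclusion--exclusion, and the identity $(1-\ttt^{\,\uu(f)})\,P_{\{u_i\}}(\ttt)=\sum_{\kk\in\ZZ^n_{\ge0},\,\kk\not\ge\mm}\ttt^{\,\uu(\xx^{\,\kk})}$ coincides with the published argument, so this is the same route rather than a new one.
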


\begin{proof}
Let $\mm$ be a vertex of the Newton diagram $\Gamma$ on the intersection of all facets of $\Gamma$. One can easily see that $\vv(f)=\vv(\xx^{\,\mm})$. Therefore, for all points $\kk$ in $\Gamma$, one has $\ell_i(\kk)\ge\ell_i(\mm)$ for $i=1, \ldots, s$. For $\vv=(v_1, \ldots , v_s) \in\ZZ_{\ge0}^s$, let $L_i=\{\ell_i(\kk)=v_i\}$, $i=1,\ldots, s$, be the corresponding affine hyperplanes in $\RR^s$.
In $\calO_{\CC^n,0}$, the factorspace $J(\vv)/J(\vv+\uone)$ is freely generated by the monomials $\xx^{\,\kk}$ with $\kk$ from the boundary of the domain $B=\{\ell_i(\kk)\ge v_i \mbox{ for } i=1,\ldots, s\}$. The relations between these generators in $\calO_{V,0}=\calO_{\CC^2,0}/(f)$ correspond to non-negative integer translations of the Newton diagram $\Gamma$ such that the translate of $\Gamma$ is contained in $B$ and intersects the boundary $\partial B$. For each such translation, the translate $\mm'$ of the vertex $\mm$ lies on $\partial B$ as well. (If the values of all linear functions $\ell_i$ at the point $\mm'$ are greater than $v_i$, this holds for the translates of other points of $\Gamma$ as well.) Let $\lambda(\kk)$ be a generic linear function such that it has different values at different integer points (i.e., it is "irrational") and its value at the vertex $\mm$ is greater than at all other points of the Newton diagram $\Gamma$. Using translates of $\Gamma$ in the order of decreasing values of the function $\lambda$ on the translation vectors, one eliminates all the translates of the vertex $\mm$. In this way one eliminates all the integer points $\kk$ on $\partial B$ with $\kk\ge\mm$. The monomials corresponding to the remaining integral non-negative points on $\partial B$ form a basis of the factor space $J(\vv)/J(\vv+\uone)$ in $\calO_{V,0}$. Moreover the space $J(\vv+\uone_{\{i\}})/J(\vv+\uone)$ is freely generated by the monomials corresponding to those points which do not lie on $L_i$. The equation (\ref{alt}) and the inclusion-exclusion formula imply that the coefficient at $\ttt^{\,\,\vv}$ in the Poincar\'e series is equal to the number of those non-negative integer points $\kk\in\partial B$ with $\kk\not\ge\mm$ which belong to all the hyperplanes $L_i$. Therefore
$$
P_{\{v_i\}}(\ttt)=\sum\limits_{\kk\in\ZZ_{\ge0}^s:\kk\not\ge\mm}\ttt^{\,\uu(\xx^{\,\kk})}
=(1-\ttt^{\,\uu(\xx^{\,\mm})})\cdot \prod\limits_{i=1}^s(1-\ttt^{\,\uu(x_i)})^{-1}=(1-\ttt^{\,\,\uu(f)})\cdot P_{\{u_i\}}(\ttt)\,.
$$
\end{proof}

Up to now we had only two types of equations for the Poincar\'e series of Newton filtrations like in Theorem~\ref{Thm:1}. Moreover all coefficients in these series were non-negative. This could produce a hope that Newton filtrations are induced by gradings of the coordinate ring. The following examples show that, in general, neither all the Poincar\'e series of Newton filtrations are of these types nor all the coefficients in them are non-negative. In these examples we compute the coefficient of the Poincar\'e series at $\ttt^{\, \vv(f)}$.

\begin{examples} 
{\bf 1.} For $f(x,y,z)=x^5+y^5+z^5+x^2yz+xy^2z+xyz^2$ the Newton diagram consists of 4 facets $\gamma_0$, $\gamma_1$, $\gamma_2$, and $\gamma_3$ lying on the hyperplanes with the equations $k_x+k_y+k_z=4$, $2k_x+k_y+k_z=5$, $k_x+2k_y+k_z=5$, and $k_x+k_y+2k_z=5$ respectively. Besides the vertices $(2,1,1), \ldots , (0,0,5)$ of the Newton diagram, there are 12 integral points on the diagram: 4 on each of the facets $\gamma_1$, $\gamma_2$, and $\gamma_3$. One integer point, say $(0,0,5)$ can be eliminated using the relation $f=0$. The space $J(\vv)/J(\vv+\uone)$ ($\vv=\vv(f)$) is freely generated by the 17 remaining monomials. The subspace $F_{\vv}$ in it is the complement of the union of the 4 subspaces given by the equations ($c_{\kk}$ is the coefficient at $\xx^{\, \kk}$ in the power series decomposition of a function):
\begin{eqnarray*}
J(\vv+\uone_{\{0 \}})/J(\vv+\uone) & = & \{ a_{211} = a_{121} = a_{112} \}, \\
J(\vv+\uone_{\{1 \}})/J(\vv+\uone) & = & \{ a_{050} = a_{041} = a_{032} = a_{023} = a_{014} = a_{121} = a_{112} =0\}, \\
J(\vv+\uone_{\{2 \}})/J(\vv+\uone) & = & \{ a_{500} = a_{401} = a_{302} = a_{203} = a_{104} = a_{211} = a_{112} =0\}, \\
J(\vv+\uone_{\{3 \}})/J(\vv+\uone) & = & \{ a_{500} = a_{410} = a_{320} = a_{230} = a_{140} = a_{050} = a_{211} = a_{121}\}.
\end{eqnarray*}
From these data one can easily compute that the coefficient at $\ttt^{\, \vv(f)} = t_0^4t_1^5t_2^5t_3^5$ in the Poincar\'e series is equal to $-1$. Therefore, in this case,  the Newton filtration is not induced by a grading.

\noindent {\bf 2.} For 
$f(x,y,z) = x^{20}+y^{20}+z^{16}+ x^8y^8+x^6y^6z^2+x^2y^2z^{10}+x^3y^8z^3+x^8y^3z^3$
the Newton diagram consists of 5 facets $\gamma_0$, $\gamma_1$, $\gamma_2$, $\gamma_3$, and $\gamma_4$ lying on the hyperplanes with the equations $k_x+k_y+k_z=14$, $2k_x+3k_y+5k_z=40$, $3k_x+2k_y+5k_z=40$, $11k_x+4k_y+5k_z=80$, and $4k_x+11k_y+5k_z=80$ respectively. Computations like in Example~1 yield the coefficient at  $\ttt^{\, \vv(f)} = t_0^{14}t_1^{40}t_2^{40}t_3^{80}t_4^{80}$ to be equal to 1. Since $\vv(f)=(14,40,40,80,80)$ is not a linear combination of $\vv(x)=(1,2,3,11,4)$, $\vv(y)=(1,3,2,4,11)$, and $\vv(z)=(1,5,5,5,5)$, the Poincar\'e series is not of one of the types of Theorem~\ref{Thm:1}.
\end{examples}


\bigskip
\noindent Leibniz Universit\"{a}t Hannover, Institut f\"{u}r Algebraische Geometrie,\\
Postfach 6009, D-30060 Hannover, Germany \\
E-mail: ebeling@math.uni-hannover.de\\

\medskip
\noindent Moscow State University, Faculty of Mechanics and Mathematics,\\
Moscow, GSP-1, 119991, Russia\\
E-mail: sabir@mccme.ru
\end{document}